\theoremstyle{plain}
\newtheorem{thm}{\protect\theoremname}
\theoremstyle{plain}
\newtheorem{defn}[thm]{\protect\definitionname}
\theoremstyle{plain}
\newtheorem{prop}[thm]{\protect\propositionname}
\theoremstyle{plain}
\newtheorem{lem}[thm]{\protect\lemmaname}
\theoremstyle{plain}
\theoremstyle{plain}
\theoremstyle{plain}
\newtheorem{cor}[thm]{\protect\corollaryname}
\providecommand{\lemmaname}{Lemma}
\providecommand{\definitionname}{Definition}
\providecommand{\propositionname}{Proposition}
\providecommand{\theoremname}{Theorem}
\providecommand{\Remarkname}{Remark}
\providecommand{\conjecturename}{Conjecture}
\providecommand{\corollaryname}{Corollary}
\newcommand{\Rn}{\ensuremath  \mathbb{R}^N_+}
\thanks{\it 2020 Mathematics Subject
	Classification: 	35J75, 35A02, 35B09}
\thanks{$^*$ corresponding author}
\title{Classification of  solutions to $-\Delta u={u^{-\gamma}}$ in the half-space}
\email{montoro@mat.unical.it., muglia@mat.unical.it, sciunzi@mat.unical.it}
\address{Department of Mathematics and Computer Science, UNICAL, Rende (CS), Italy}
\author[L.\ Montoro]{Luigi Montoro}
\author[L.\ Muglia]{Luigi Muglia}
\author[B.\ Sciunzi]{Berardino Sciunzi $^*$}
\begin{document}

\begin{abstract}
We provide a classification result for positive solutions to $-\Delta u=\frac{1}{u^\gamma}$ in the half space, under zero Dirichlet boundary condition. 
\end{abstract}
\keywords{Singular solutions, half-space, classification result.}

\maketitle

\section{Introduction}
We deal with the classification of positive solutions to the singular problem 
\begin{equation}\tag{$\mathcal P_{\gamma}$}\label{MP}
\begin{cases}
\displaystyle -\Delta u=\frac{1}{u^\gamma} & \text{in} \,\, \mathbb{R}^N_+\\
u>0 & \text{in} \,\,  \mathbb{R}^N_+\\
u=0 & \text{on} \,\, \partial \mathbb{R}^N_+,
\end{cases}
\end{equation}
where $N\geq 1$, $\gamma >1$, $x\in \mathbb{R}^N_+$ is represented by $x=(x',x_N)$, $x'\in \mathbb{R}^{N-1}$ and $\mathbb{R}^N_+:=\{x\in \mathbb{R}^N:x_N>0\}$.
This is a captivating problem itself but it also arises in the study of limiting scaling arguments at the boundary,  in bounded domains, for solutions to 
\[
-\Delta u=\frac{1}{u^\gamma}+f(u)
\]
since the term ${1}/{u^\gamma}$ is the leading part near zero, see e.g. \cite{CaEsSc, EsSc}. 
Although the expert reader may guess that the decreasing nature of the nonlinearity is favourable for the application of maximum and comparison principles, we stress that here, solutions are not in the right Sobolev space in order to do that (in particular for $\gamma>1$). This causes a deep and challenging issue that we analyze  introducing some new ideas.\\

\noindent Taking into account the nature of the problem, it follows that, the natural assumption that we shall adopt in all the paper is
 $$u\in C^2(\mathbb{R}^N_+)\cap C^0(\overline{\mathbb{R}^N_+})\,.$$ 
 Note that the continuity up to the boundary of the solutions can be proved as in \cite{CraRaTa}. Therefore the equation is understood in the classic meaning in the interior of the domain, or in the variational meaning as in the following:
 \begin{equation}\label{w-sol}
 \int_{\Rn}\nabla u \cdot\nabla \varphi=\int_{\Rn}\frac{1}{u^\gamma}\varphi \qquad \forall \varphi\in C^1_c(\mathbb{R}_+^N).
\end{equation}

We will classify all the locally bounded solutions according to the following hypothesis\ \\

 \noindent {\bf(hp)} There exists $\bar\lambda>0$  such that  $u$ is bounded on the set $\Sigma_{\bar\lambda}$. We set $\theta\in \mathbb R$ such that 
\[
\theta:=\sup_{\Sigma_{\bar\lambda}}u(x).
\]
where the strip  $\Sigma_{\bar\lambda}$  is defined in Section  \ref{sezpde}.
\noindent Our main result is the following
\begin{thm}\label{mainthm}
 Let $u$ be a solution of \eqref{MP} fulfilling ${\bf (hp)}$. Then 
 \[u(x)=u(x', x_N)=u(x_N).\] 
 Consequently  either
$$
  u(t)=\frac{(\gamma+1)^\frac{2}{\gamma+1}}{(2\gamma-2)^\frac{1}{\gamma+1}}t^\frac{2}{\gamma+1}
 $$
or
$$
 u(t)=\lambda^{-\frac{2}{\gamma+1}}v(\lambda t) \quad \lambda>0,
$$
being $v(t)\in C^2(\mathbb{R}_+)\cap C(\overline{\mathbb{R}_+})$ the unique solution to
\begin{eqnarray}
\begin{cases}
\displaystyle - v''=\frac{1}{v^\gamma} & t>0\\
v(t)>0 &t>0 \\
v(0)=0\quad  \displaystyle \lim_{t\to+\infty}v'(t)=1.
\end{cases}
\end{eqnarray}
\end{thm}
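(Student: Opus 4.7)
The overall plan is to apply the method of moving planes in every horizontal direction $e_i$, $i=1,\dots,N-1$, in order to conclude that $u$ depends only on $x_N$. Once this symmetry is established, the theorem reduces to the classification of positive solutions of the ODE $-v''=v^{-\gamma}$ with $v(0)=0$. The delicate point, emphasised already in the introduction, is that for $\gamma>1$ the gradient of $u$ need not belong to $L^2$ in a neighbourhood of $\partial\mathbb{R}^N_+$, and therefore the classical moving-planes machinery, which relies on testing the equation against $(u-u_\mu)^+$ whose support reaches the boundary, does not apply directly: some new tools are needed.

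Fix a horizontal direction, say $e_1$, and set, for $\mu\in\mathbb{R}$, the reflection $x^{\mu}=(2\mu-x_1,x_2,\dots,x_N)$, the reflected function $u_\mu(x)=u(x^{\mu})$, and the half-strip $\Omega_\mu=\{x_1>\mu\}\cap\mathbb{R}^N_+$. The goal is to prove $u\le u_\mu$ in $\Omega_\mu$ for every $\mu\in\mathbb{R}$: applying the argument also from the opposite side then forces $u$ to be independent of $x_1$, and iterating over every horizontal direction yields $u=v(x_N)$. At a fixed $\mu$, the comparison is encoded, formally, in the inequality
\[
\int_{\Omega_\mu}|\nabla(u-u_\mu)^+|^2\,dx\;\le\;\int_{\Omega_\mu}\Big(\frac{1}{u^\gamma}-\frac{1}{u_\mu^\gamma}\Big)(u-u_\mu)^+\,dx\;\le\;0,
\]
in which both signs are favourable: the right-hand side is non-positive because $s\mapsto s^{-\gamma}$ is decreasing, while the left-hand side is non-negative. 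The rigorous execution requires replacing $(u-u_\mu)^+$ by a careful truncation that vanishes near $\partial\mathbb{R}^N_+$ and has controlled horizontal support, and then passing to the limit. This is where hypothesis (hp) enters, since the bound $\theta$ in $\Sigma_{\bar\lambda}$ supplies the integrability needed to let the boundary cut-off disappear.

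To initialise the procedure one needs $u\le u_\mu$ in $\Omega_\mu$ for some reference value of $\mu$. I would combine a small-measure argument with a weighted Sobolev/Poincar\'e inequality: the set $\{(u-u_\mu)^+>0\}\cap\Omega_\mu$ has small measure thanks to (hp) and the Dirichlet condition, which then forces $(u-u_\mu)^+\equiv 0$. A standard sliding/continuity argument, together with the strong maximum principle (available in the interior, where the equation is classical), propagates the inequality to every $\mu\in\mathbb{R}$. The principal obstacle is precisely the rigorous weak comparison principle in the absence of $H^1$-regularity up to $\partial\mathbb{R}^N_+$; this is where the ``new ideas'' announced in the introduction must play the decisive role.

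Once the reduction $u=v(x_N)$ is available, multiplying $-v''=v^{-\gamma}$ by $v'$ and integrating produces the first integral
\[
\tfrac12\,(v')^2\;=\;\frac{v^{\,1-\gamma}}{\gamma-1}+K,
\]
with $v'>0$ (since $v''<0$ and $v\ge 0$ force $v'$ to be positive and monotone, whence $v$ is unbounded) and $K\ge 0$ (since $v^{1-\gamma}\to 0$ as $t\to+\infty$ and the left-hand side is non-negative). If $K=0$, separation of variables gives exactly the explicit power solution with constant $(\gamma+1)^{2/(\gamma+1)}/(2\gamma-2)^{1/(\gamma+1)}$; if $K>0$, the scaling $v_\lambda(t)=\lambda^{-2/(\gamma+1)}v(\lambda t)$ preserves the equation and reduces $K$ to $1/2$, i.e.\ $v_\lambda'(+\infty)=1$, so that standard ODE uniqueness selects the unique normalised profile stated in the theorem.
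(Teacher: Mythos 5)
Your ODE analysis (once $u=v(x_N)$) is essentially correct and coincides with what Section~3 of the paper does more formally: the first integral $\tfrac12(v')^2=v^{1-\gamma}/(\gamma-1)+K$, the dichotomy $K=0$ versus $K>0$, and the scaling to normalise $v'(+\infty)=1$ are exactly the content of \eqref{eq:esam3} and Theorem~\ref{Thmunic}.

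The 1-D reduction, however, is where your proposal departs from the paper and where it breaks down. You propose the method of moving planes, reflecting across hyperplanes $x_1=\mu$ and trying to prove $u\le u_\mu$ in $\Omega_\mu=\{x_1>\mu\}\cap\mathbb R^N_+$ for all $\mu$. Two concrete obstructions arise. First, there is no initialisation: for every $\mu$ the domain $\Omega_\mu$ is an unbounded half-strip, the solution has no decay in the horizontal variables, and the set where $(u-u_\mu)^+>0$ has no reason whatsoever to have small measure --- the ``small-measure plus Poincar\'e'' step you invoke simply does not apply, since (hp) only gives boundedness in $x_N$, not any localisation in $x'$. Second, and more fundamentally, the reflected function $u_\mu$ shares the \emph{same} singular Dirichlet boundary $\{x_N=0\}$ as $u$, so $(u-u_\mu)^+$ is supported up to $\partial\mathbb R^N_+$ and the difference quotient $\left(\dfrac{1}{u^\gamma}-\dfrac{1}{u_\mu^\gamma}\right)\dfrac{1}{u-u_\mu}$ blows up there: this is precisely the $\gamma>1$ regularity obstruction you flag, and reflections do nothing to circumvent it.

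What the paper does instead is a sliding argument with an upward tilt: it compares $u$ not with a reflection but with the translate $u_{\tau,\sigma}(x)=u(x+\tau e_i+\sigma e_N)$ with $\sigma>0$. The crucial consequence of the sharp two-sided asymptotics $C_1 x_N^{2/(\gamma+1)}\le u\le C_2 x_N^{2/(\gamma+1)}$ (Lemmas~\ref{thm:spr} and \ref{thm:Lazer}) is that for any fixed $\sigma>0$ one has $u<u_{\tau,\sigma}$ in a full boundary strip $\Sigma_{\hat\lambda}$, so the comparison can be posed in $D=\mathbb R^N_+\setminus\Sigma_{\hat\lambda}$, which is strictly away from the singular set, where $c(x)$ is bounded and $z=u-u_{\tau,\sigma}$ is bounded thanks to the linear-growth and gradient estimates (Proposition~\ref{thm:lineare} and \ref{thm:gradbound}). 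One then applies the Berestycki--Caffarelli--Nirenberg maximum principle in $D$ and finally lets $\sigma\to 0$. This $\sigma$-lift, which replaces the singular boundary comparison by one at a uniformly positive distance from $\partial\mathbb R^N_+$, is the missing ``new idea'' in your sketch; without it, and without the preparatory growth estimates, the PDE half of the argument has a genuine gap.
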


The starting crucial issue in our proof is the study of the accurate asymptotic behavior  of the solutions up to the boundary as well as at infinity. Actually we shall show that every solution has at most linear growth far from the boundary, which is a sharp estimate. This analysis will allow us to exploit a celebrated result of Berestycki,  Caffarelli and Nirenberg \cite{BeCaNi} to deduce that the solutions exhibit 1-D symmetry. Then, taking into account the non standard nature of the equation arising from the singular term, we will carry out a ODE analysis to complete our proof.
%

The paper is organized as follows: in Section \ref{sezpde} we provide the proofs of the asymptotic analysis and exploit it to prove the 1-D result.
 In Section \ref{sezode} we carry out the ODE analysis. We conclude in Section \ref{Sec4} with the proof of our main result.

\section{Asymptotic analysis and 1-D symmetry}\label{sezpde}

\noindent In all the paper we shall use the notation given in the following
\begin{defn}
	Given  $0<a<b$, we define the strip $\Sigma_{(a,b)}$ as the set given by
	\begin{equation}\label{eq:strip} 
		\Sigma_{(a,b)}:=\{x\in \mathbb R^N_+\,:\, a < x_N < b\}.
	\end{equation}
	We also  set $\Sigma_{(0,b)}:=\Sigma_{b}$.
\end{defn}
In all this  section we will use some ODEs arguments that are actually contained in the more general analysis of Section \ref{sezode}.
\noindent We start proving
\begin{lem}\label{thm:spr}
Under the assumption ${\bf (hp)}$, it follows that  
\[u(x)\leq C x_N^{\frac{2}{\gamma+1}}\qquad  \text{in} \,\, \Sigma_{\bar \lambda},\]
with $C=C(\gamma, \theta)$ a positive constant.  
\end{lem}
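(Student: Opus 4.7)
The plan is to construct a one-dimensional barrier of the form $W(x_N) := A\, x_N^{2/(\gamma+1)}$ and to show that $u \le W$ in $\Sigma_{\bar\lambda}$ by means of a strip maximum principle. A direct computation gives
\[
-\Delta W = \frac{2A(\gamma-1)}{(\gamma+1)^2}\, x_N^{-2\gamma/(\gamma+1)}, \qquad W^{-\gamma} = A^{-\gamma}\, x_N^{-2\gamma/(\gamma+1)},
\]
so that $W$ is a classical supersolution of the equation in \eqref{MP} precisely when $A \ge A_0 := \bigl((\gamma+1)^2/[2(\gamma-1)]\bigr)^{1/(\gamma+1)}$; note that $A = A_0$ reproduces exactly the explicit profile appearing in Theorem \ref{mainthm}. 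I would then fix $A := \max\{A_0,\, \theta\, \bar\lambda^{-2/(\gamma+1)}\}$ so that, in addition, $W(\bar\lambda) \ge \theta$, and set $C := A$.

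On the bottom face $\{x_N = 0\}$ one has $u = W = 0$, while on the top face $\{x_N = \bar\lambda\}$ one has $u \le \theta \le W$ by \textbf{(hp)}. Define $\Phi := (u-W)^+$; it is continuous on $\overline{\Sigma_{\bar\lambda}}$, vanishes on $\partial \Sigma_{\bar\lambda}$, and is bounded above by $\theta$ in the strip. At any interior point where $u > W > 0$, the monotonicity of $s \mapsto s^{-\gamma}$ combined with the supersolution property of $W$ yields
\[
-\Delta(u-W) \;=\; u^{-\gamma} + \Delta W \;\le\; u^{-\gamma} - W^{-\gamma} \;<\; 0,
\]
so $u-W$ is subharmonic where positive and therefore $\Phi$ is (weakly) subharmonic in $\Sigma_{\bar\lambda}$. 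Applying the Phragm\'en--Lindel\"of principle on the strip $\mathbb{R}^{N-1}\times(0,\bar\lambda)$---a bounded subharmonic function that is nonpositive on both horizontal faces is nonpositive throughout---forces $\Phi \equiv 0$, which gives $u(x) \le A\, x_N^{2/(\gamma+1)}$ in $\Sigma_{\bar\lambda}$, the asserted bound.

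The main obstacle is the absence of any a priori control on $u$ as $|x'| \to \infty$, which prevents a naive exhaustion by balls and is precisely the reason the singular setting is subtle. Hypothesis \textbf{(hp)} enters in a crucial way at this juncture: the global $L^\infty$ bound $\theta$ on $\Sigma_{\bar\lambda}$ is exactly what makes $\Phi$ bounded and thus activates Phragm\'en--Lindel\"of. If one prefers to avoid invoking the strip principle directly, one may replace it by testing $\Phi$ against an explicit auxiliary barrier such as $x_N(\bar\lambda - x_N)$ multiplied by a smooth cutoff in the tangential variable $x'$, and then letting the cutoff expand to infinity, using the boundedness of $\Phi$ to kill the resulting remainder terms; this realises the same maximum principle by hand.
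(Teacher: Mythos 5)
Your barrier is essentially the same as the paper's (a multiple of $x_N^{2/(\gamma+1)}$, normalised so that it dominates $\theta$ on $\{x_N=\bar\lambda\}$), and your computation showing it is a supersolution for $A\ge A_0$ is correct; what differs is the comparison mechanism. The paper does \emph{not} invoke a Phragm\'en--Lindel\"of principle: it shifts the barrier to $w_{\beta,\varepsilon}(x)=w_\beta(x_N+\varepsilon)$ (so that the barrier is smooth and positive up to $\{x_N=0\}$ and the test function $(u-w_{\beta,\varepsilon})^+\phi_R^2$ is admissible in the weak formulation), multiplies by a cut-off $\phi_R$ in $x'$, and runs an energy estimate. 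The key structural input there is that, because $u$ is bounded on $\Sigma_{\bar\lambda}$, the ratio $\bigl(u^{-\gamma}-w_{\beta,\varepsilon}^{-\gamma}\bigr)/(u-w_{\beta,\varepsilon})\le-\eta<0$, and this strictly negative zeroth-order coefficient absorbs the $O(R^{-2})$ error produced by the cut-off, so that one may let $R\to\infty$ and then $\varepsilon\to 0$. Your route replaces this by the observation that $\Phi=(u-W)^+$ is a bounded, continuous, subharmonic function on the open strip that vanishes on both horizontal faces, so the strip Phragm\'en--Lindel\"of maximum principle gives $\Phi\equiv 0$; this is correct (one does need to check, as you do implicitly, that $\Phi$ is genuinely subharmonic, which follows from the standard fact that $v^+$ is subharmonic whenever $v$ is continuous and subharmonic on $\{v>0\}$). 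A real advantage of your approach is that it sidesteps the boundary regularity issue entirely (no translation by $\varepsilon$, no weak formulation with barely admissible test functions) because the argument is purely pointwise in the interior and only uses continuity up to $\partial\Sigma_{\bar\lambda}$; the trade-off is that you rely on a black-box Phragm\'en--Lindel\"of statement for strips rather than carrying out a self-contained estimate. One small caveat: your closing sketch of an ``elementary'' replacement for Phragm\'en--Lindel\"of, via the auxiliary function $x_N(\bar\lambda-x_N)$ times a tangential cut-off, is not quite what one wants --- $x_N(\bar\lambda-x_N)$ is not harmonic and does not grow in $x'$; the standard comparison function is something like $\sum_{j<N}\cosh(\pi x_j/\bar\lambda)\sin(\pi x_N/\bar\lambda)$, or one argues through harmonic measure of the lateral boundary of $\{|x'|<R\}\times(0,\bar\lambda)$. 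That is a presentational remark, not a gap: the main argument stands.
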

\begin{proof}
Let us consider the $1$-D solution $w(x_N)$ of 
\begin{equation*}
\begin{cases}
\displaystyle -w''=\frac{1}{w^\gamma}& \text{in}\,\, \mathbb R^+\\
w(0)=0\\
w>0& \text{in}\,\, \mathbb R^+,
\end{cases}
\end{equation*} 
given in \eqref{eq:esam3}. Note that $w_\beta=\beta w$ solves $-\Delta w_\beta=\beta^{\gamma+1}/w_\beta^\gamma$. Therefore, for $\beta >1$, we have 
\begin{equation}\label{eq:wbeta}
\begin{cases}
\displaystyle -\Delta w_\beta> \frac{1}{w_\beta} & \text{in}\,\, \mathbb R_N^+\\
w_\beta>0 & \text{in} \,\, \mathbb{R}^N_+\\
w_\beta=0 & \text{on }\,\, \partial \mathbb{R}^N_+.
\end{cases}
\end{equation}
Now, since 
\begin{equation}\label{eq:pen1}
w_\beta =\beta w(\bar \lambda)\quad \text{on}\,\, \{x\in \mathbb R^N_+\,:\,  x_N= \bar \lambda\},\end{equation}
we take $\beta$ large so that, for $\beta\geq \theta/w(\bar \lambda)$ we deduce
\[w_\beta\geq\beta w(\bar \lambda)\geq \theta.\]
Consequently $u,w_\beta$ are well ordered on the boundary of the strip $\Sigma_{\bar\lambda}$ (see \eqref{eq:strip}), namely 
\begin{equation}\label{eq:esam2}
u\leq w_\beta\quad \text{on}
\,\, \partial \Sigma_{\bar\lambda}.
\end{equation}
In order to prove  a comparison principle, we have to take into account that  we are working  in the unbounded domain  $\mathbb R^N_+$ and both $u,w_\beta$ lose regularity at the boundary of the half-space $\mathbb R^N_+$.  For this reason we start  defining 
$\phi_R(x'):\mathbb R^{N-1}\rightarrow \mathbb R$ such that
\begin{equation}\label{eq:esam1}
\begin{cases}
\phi_R(x')=1 & \text{in}\,\, B'_R(0)\\
\phi_R(x')=0 & \text{in}\,\, \mathbb R^{N-1}\setminus B'_{2R}(0)\\
\displaystyle |\nabla\phi_R(x') |\leq \frac{C}{R}  & \text{in}\,\, \mathbb R^{N-1},
\end{cases}
\end{equation}
where we recall that a point $x\in \mathbb \Rn$ is denoted by $x=(x',x_N)$ with $x'\in \mathbb R^{N-1}$ and where $$B'_R(0):=\{x'\in \mathbb R^{N-1}\,:\, |x'|<R\}.$$
Moreover, let us define the translated function (indeed still a supersolution to~\eqref{eq:wbeta})
\[w_{\beta, \varepsilon}=w_\beta (x_N+\varepsilon)\]
and let $\varphi_R$  defined as 
\[\varphi_R=(u-w_{\beta, \varepsilon})^+\phi_R^2.\] 
One can check, using a suitable argument based on the continuity of $u$ and $w_{\beta, \varepsilon}$,  that  $\varphi_R$   is indeed a suitable function test to both problems \eqref{MP} and \eqref{eq:wbeta}.
Let us also define the cylinder $${C}(R):=\left\{ \Sigma_{\bar\lambda}\cap \overline{\{B'_R(0)\times \mathbb{R}\}} \right\}.$$
Then using $\varphi_R$ in the weak formulations satisfied by $u$ and by $w_\varepsilon$,  we obtain 
\begin{eqnarray*}
&&\int_{C(2R)}(\nabla u,\nabla(u-w_{\beta, \varepsilon})^+)\phi_R^2\, dx+2 \int_{C(2R)}(\nabla u,\nabla\phi_R)\phi_R (u-w_{\beta, \varepsilon})^+\, dx\\
&&=\int_{C(2R)}\frac{1}{u^\gamma}(u-w_{\beta, \varepsilon})^+\phi_R^2\, dx
\end{eqnarray*}
and
\begin{eqnarray*}
&&\int_{C(2R)}(\nabla w_{\beta, \varepsilon},\nabla(u-w_{\beta, \varepsilon})^+)\phi_R^2\, dx+2 \int_{C(2R)}(\nabla w_{\beta, \varepsilon},\nabla\phi_R)\phi_R (u-w_{\beta, \varepsilon})^+\, dx\\
&&\geq \int_{C(2R)}\frac{1}{w_{\beta, \varepsilon}^\gamma}(u-w_{\beta, \varepsilon})^+\phi_R^2\, dx.
\end{eqnarray*}
Subtracting the last inequalities we obtain
\begin{eqnarray}\label{eq:mad 2}
&&\int_{C(2R)}|\nabla (u-w_{\beta, \varepsilon})^+|^2\phi_R^2\, dx \\\nonumber
&\leq& 2\int_{C(2R)}|\nabla (u-w_{\beta, \varepsilon})^+||\nabla \phi_R|\phi_R(u-w_{\beta, \varepsilon})^+\, dx\\\nonumber &&+\int_{C(2R)}\left(\frac{1}{u^\gamma}-\frac{1}{w_{\beta, \varepsilon}^\gamma}\right)(u-w_{\beta, \varepsilon})^+\phi_R^2\, dx \\\nonumber
&\leq& 
2\int_{C(2R)}|\nabla (u-w_{\beta, \varepsilon})^+||\nabla \phi_R|\phi_R(u-w_{\beta, \varepsilon})^+\, dx\\\nonumber &&+\int_{C(2R)}\left(\frac{1}{u^\gamma}-\frac{1}{w_{\beta, \varepsilon}^\gamma}\right)\frac{[(u-w_{\beta, \varepsilon})^+]^2}{(u-w_{\beta, \varepsilon})}\phi_R^2\, dx. 
\end{eqnarray}
We  observe  that there exists a positive constant  $\eta=\eta(\gamma,\theta)$ such that
\[\left(\frac{1}{u^\gamma}-\frac{1}{w_{\beta, \varepsilon}^\gamma}\right)\frac{1}{u-w_{\beta, \varepsilon}}\leq -\eta<0,\]
since $u$ is bounded on $\Sigma_{\bar\lambda}$ by ${\bf (hp)}$. Moreover using   Young inequality and \eqref{eq:esam1}, we  also deduce that 
\begin{eqnarray}\label{eq:mad 3}
&&\int_{C(2R)}|\nabla (u-w_{\beta, \varepsilon})^+||\nabla \phi_R|\phi_R(u-w_{\beta, \varepsilon})^+\, dx\\\nonumber
&&\leq \delta\int_{C(2R)}|\nabla (u-w_{\beta, \varepsilon})^+|^2\phi_R^2\, dx+\frac{C(\delta)}{R^2}\int_{C(2R)}[(u-w_{\beta, \varepsilon})^+]^2\, dx.
\end{eqnarray}
 Therefore, using \eqref{eq:mad 3} in \eqref{eq:mad 2}, we get 
 \begin{eqnarray}\nonumber
&&\int_{C(2R)}|\nabla (u-w_{\beta, \varepsilon})^+|^2\phi_R^2\, dx \\\nonumber
&\leq& 2\delta\int_{C(2R)}|\nabla (u-w_{\beta, \varepsilon})^+|^2\phi_R^2\, dx+\frac{2C(\delta)}{R^2}\int_{C(2R)}[(u-w_{\beta, \varepsilon})^+]^2\, dx\\\nonumber
&& -2\eta \int_{C(2R)}[(u-w_{\beta, \varepsilon})^+]^2\, dx. 
\end{eqnarray}
For $\delta$ small fixed we deduce that 
 \begin{eqnarray}
\nonumber &&(1-2\delta) \int_{C(2R)}|\nabla (u-w_{\beta, \varepsilon})^+|^2\phi_R^2\, dx \leq
\frac{2C(\delta)}{R^2}\int_{C(2R)}[(u-w_{\beta, \varepsilon})^+]^2\, dx\\
&&-2\eta \int_{C(2R)}[(u-w_{\beta, \varepsilon})^+]^2\, dx. 
\end{eqnarray}
For $R$ large we have that ${C(\delta)}/{R^2}<\eta$  and therefore 
\[\int_{C(2R)}|\nabla (u-w_{\beta, \varepsilon})^+|^2\phi_R^2\, dx \leq 0.
\]
By Fatou's Lemma for $R\rightarrow +\infty$,  we  obtain
\[\int_{\Sigma_{\bar \lambda}}|\nabla (u-w_{\beta, \varepsilon})^+|^2\, dx \leq 0.
\]
 Exploiting  \eqref{eq:esam2} we deduce that actually
 \[u\leq w_{\beta, \varepsilon},\quad \text{for all}\,\, \varepsilon >0.\] Finally, by continuity, we have that $u\leq w_{\beta}$. Recalling \eqref{eq:pen1} and  \eqref{eq:pen2} we get thesis. 
\end{proof}
\noindent Without assuming any a priori assumption, we prove the following 
\begin{lem}\label{thm:Lazer}
There exists a constant $C=C(\gamma)$ such that 
\[u \geq C x_N^{\frac{2}{\gamma+1}}\quad \text{in}\,\, \mathbb R^N_+.\]
\end{lem}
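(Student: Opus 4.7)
The plan is to compare $u$ from below against a family of one-dimensional barriers supported on slabs of finite height $S$, and then to send $S \to \infty$. The crucial gain with respect to Lemma \ref{thm:spr} is that the required $L^\infty$ bound on $u$ will come automatically on the support of the relevant test function, so no hypothesis analogous to \textbf{(hp)} is needed.

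\emph{Step 1 (the one-dimensional barrier $w_S$).} The ODE analysis of Section \ref{sezode} provides, for each $S>0$, a positive classical solution $w_S \in C^2(0,S)\cap C([0,S])$ of $-w_S''=1/w_S^\gamma$ in $(0,S)$ with $w_S(0)=w_S(S)=0$. By the scaling invariance of the equation one has $w_S(t)=S^{2/(\gamma+1)}W(t/S)$ for a fixed profile $W$ on $(0,1)$; in particular $\|w_S\|_\infty<\infty$, and since the boundary asymptotic of $W$ at $0$ is forced by the equation itself to be $W(\tau)\sim C(\gamma)\,\tau^{2/(\gamma+1)}$, with $C(\gamma)=\bigl((\gamma+1)^2/(2(\gamma-1))\bigr)^{1/(\gamma+1)}$, one obtains, for every fixed $t>0$,
\[
\lim_{S\to\infty} w_S(t) = C(\gamma)\,t^{2/(\gamma+1)}.
\]

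\emph{Step 2 (comparison $u\geq w_S$ on $\Sigma_{(0,S)}$).} Viewing $w_S$ as a function on $\Sigma_{(0,S)}$ depending only on $x_N$, I would establish $u\geq w_S$ on this slab by mirroring Lemma \ref{thm:spr} with the roles of $u$ and the barrier swapped. Concretely, one tests with $\varphi_R=(w_S-u)^+\phi_R^2$ (the lateral cut-off $\phi_R$ as in \eqref{eq:esam1}, combined with a small translation $w_S(\,\cdot\,+\varepsilon)$ and an $x_N$-cut-off to make $\varphi_R$ admissible near the top and bottom of the slab, exactly as in Lemma \ref{thm:spr}). Subtracting the weak formulations of $u$ and $w_S$ and absorbing the gradient cross-term via Young's inequality yields an inequality of the shape
\[
(1-2\delta)\!\int\! |\nabla(w_S-u)^+|^2\phi_R^2 \,\leq\, \tfrac{2C(\delta)}{R^2}\!\int\! [(w_S-u)^+]^2 \,+\, \!\int\! \Bigl(\tfrac{1}{w_S^\gamma}-\tfrac{1}{u^\gamma}\Bigr)(w_S-u)^+\phi_R^2.
\]
The decisive observation, and the reason no a priori bound on $u$ is required here, is that on the support of $(w_S-u)^+$ one has automatically $0<u<w_S\leq M:=\|w_S\|_\infty$, whence by the mean-value theorem applied to $s\mapsto 1/s^\gamma$,
\[
\Bigl(\tfrac{1}{w_S^\gamma}-\tfrac{1}{u^\gamma}\Bigr)\tfrac{1}{w_S-u}\,\leq\,-\tfrac{\gamma}{M^{\gamma+1}}\,=:\,-\eta<0.
\]
Choosing $R$ so large that $C(\delta)/R^2<\eta$ forces $(w_S-u)^+\equiv 0$ on the cylinder $C(R)$, and Fatou's lemma then gives $u\geq w_S$ on the whole slab $\Sigma_{(0,S)}$, exactly as in Lemma \ref{thm:spr}.

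\emph{Step 3 (letting $S\to\infty$).} For every $x=(x',x_N)\in\mathbb R^N_+$ and every $S>x_N$, Step 2 yields $u(x)\geq w_S(x_N)$; passing to the limit $S\to\infty$ and invoking Step 1 produces $u(x)\geq C(\gamma)\,x_N^{2/(\gamma+1)}$, which is the claim. The main obstacle is purely technical and lies in Step 2: legitimizing $\varphi_R$ as a test in both weak formulations despite the singularity of $1/s^\gamma$ at $x_N=0$ and $x_N=S$. This is handled precisely as in Lemma \ref{thm:spr}, via the shift $w_S(\,\cdot\,+\varepsilon)$ together with a final Fatou passage $\varepsilon\to 0$; the approximation works here precisely because $(w_S-u)^+\leq w_S\in L^\infty$ on its support, which is the very same bound that furnishes the positive monotonicity constant $\eta$.
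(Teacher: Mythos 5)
The approach you propose is genuinely different from the paper's. The paper constructs a compactly supported radial subsolution $w=C\varphi_1^{2/(\gamma+1)}$ from the first eigenfunction of $B_1(0)$, scales it to a ball $B_R(x_0)$ of radius $R=x_{0,N}$ tangent to $\partial\mathbb R^N_+$, and compares $u$ with that subsolution \emph{on the bounded ball} using the test function $(w_{x_0,R}-u-\varepsilon)^+$; the comparison is immediate (no cut-off, no coercivity constant, no limit in $S$) and evaluating at the centre gives the claim. Your one-dimensional slab barriers $w_S$ are conceptually appealing -- they mesh with the 1-D picture pursued in Section~\ref{sezode} and, after $S\to\infty$, even produce the sharp constant $C_\gamma$ -- but they force you to carry out the comparison on an unbounded slab, so you have to pay for the lateral cut-off $\phi_R$ with a strictly negative coercivity constant $\eta$. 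Your observation that this $\eta$ comes from the automatic bound $u<w_S\leq\|w_S\|_\infty$ on the support of the test function, with no need for \textbf{(hp)}, is the right one, and it is precisely the mechanism that the paper's Lemma~\ref{thm:Lazer} exploits as well (there it is implicit because the domain is bounded and no coercivity is actually needed).

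There is, however, a concrete gap in Step~2: the regularisation by the \emph{translation} $w_S(\cdot+\varepsilon)$, lifted from Lemma~\ref{thm:spr}, goes in the wrong direction here. Translating a supersolution $w_\beta$ inward (Lemma~\ref{thm:spr}) is useful because $w_\beta$ is increasing near $x_N=0$, so $w_\beta(\cdot+\varepsilon)$ is bounded away from zero and the support of $(u-w_{\beta,\varepsilon})^+$ is pushed away from $\{x_N=0\}$. In your setting the same translation gives $w_S(\varepsilon)>0=u$ on $\{x_N=0\}$, so $(w_S(\cdot+\varepsilon)-u)^+$ is \emph{positive} up to the boundary, exactly where $u^{-\gamma}\approx C x_N^{-2\gamma/(\gamma+1)}$ is non-integrable (the exponent $2\gamma/(\gamma+1)>1$ for $\gamma>1$). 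The supplementary ``$x_N$-cut-off'' you mention does not rescue this cheaply: absorbing the extra cross term coming from $\nabla\psi$ requires $(w_S-u)^+\lesssim x_N^{2/(\gamma+1)}$ near $x_N=0$, which gives a contribution of order $\delta^{(3-\gamma)/(\gamma+1)}$ and fails to vanish when $\gamma\geq 3$. The correct fix is the one the paper itself uses in Lemma~\ref{thm:Lazer}: test with $(w_S-u-\varepsilon)^+\phi_R^2$ (subtract $\varepsilon$, do not translate). Since $w_S$ vanishes at both ends of the slab while $u+\varepsilon\geq\varepsilon$, this test function vanishes on a full strip $\{x_N<\delta_\varepsilon\}\cup\{x_N>S-\delta_\varepsilon\}$, uniformly in $x'$, and your argument then goes through; letting $\varepsilon\to 0$ afterwards gives $u\geq w_S$. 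Finally, note that the two-point Dirichlet problem for $w_S$ on $(0,S)$, and the boundary asymptotic $W(\tau)\sim C_\gamma\tau^{2/(\gamma+1)}$ you invoke in Step~1, are classical (e.g.\ via the conserved energy $\tfrac12(W')^2-\tfrac{W^{1-\gamma}}{\gamma-1}$ and Crandall--Rabinowitz--Tartar) but are \emph{not} supplied by the paper's Section~\ref{sezode}, which only treats initial-value and half-line problems; these would have to be established separately.
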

\begin{proof}
Let us  consider  the first eigenfunction $\varphi_1 \in C^2(\overline{B_1(0)})$ solution to 
\begin{equation}\label{eq:autv}
\begin{cases}
-\Delta \varphi_1=\lambda_1\varphi_1 & \text{in}\,\, B_1(0)\\
\varphi_1>0 & \text{in}\,\, B_1(0)\\
\varphi_1 =0 & \text{on}\,\, \partial B_1(0).
\end{cases}
\end{equation}
Setting \[w=C\varphi_1^{\frac{2}{\gamma+1}},\]   with $C>0$ to be chosen, by a straightforward computations 
\begin{equation*}
\Delta w= \frac{2C(1-\gamma)}{(1+\gamma)^2}\varphi_1^{-\frac{2\gamma}{1+\gamma}}|\nabla \varphi_1|^2+\frac{2C}{1+\gamma}\varphi_1^{\frac{1-\gamma}{1+\gamma}}\Delta \varphi_1.
\end{equation*}
Using \eqref{eq:autv}, we obtain  
\begin{eqnarray*}
-\Delta w&=&\frac{2C(\gamma-1)}{(1+\gamma)^2}\varphi_1^{-\frac{2\gamma}{1+\gamma}}|\nabla \varphi_1|^2+\frac{2C\lambda_1}{1+\gamma}\varphi_1^{\frac{2}{1+\gamma}}\\\nonumber
&=& \frac{1}{C^\gamma\varphi_1^\frac{2\gamma}{\gamma+1}}\left(\frac{2C^{\gamma+1}(\gamma-1)}{(\gamma+1)^2}|\nabla \varphi_1|^2+ \frac{2\lambda_1C^{\gamma +1}}{\gamma+1}\varphi_1^2\right)\\\nonumber
&:=&\frac{\alpha(x)}{w^\gamma}\quad \text{in}\,\, B_1(0).
\end{eqnarray*}
For $C=C(\gamma)$ small enough, we get $\alpha(x)<1$ and therefore $w$ is a subsolution to $-\Delta w= w^{-\gamma}$ in $B_1(0)$. 

Let now $x_0=(x_0', x_{0,N})\in \mathbb R^N_+$ and set
\[w_{x_0, R}=R^{\frac{2}{\gamma+1}}w\left (\frac{x-x_0}{R}\right)\quad\text{in}\,\, B_R(x_0),\]\
where $R= x_{0,N}$. We have
\begin{eqnarray}
-\Delta w_{x_0, R}&=&-R^{-\frac{2\gamma}{\gamma+1}}\Delta w\left (\frac{x-x_0}{R}\right)\\\nonumber
&\leq& \frac{1}{R^{\frac{2\gamma}{\gamma+1}}  w^{\gamma}\left (\frac{x-x_0}{R}\right)}=\frac{1}{w_{x_0,R}^\gamma}\quad\text{in}\,\, B_R(x_0).
\end{eqnarray}
Let $u$ be a solution to \eqref{MP}; we observe that 
\begin{equation}\label{eq:mang1}
\begin{cases}
-\Delta u= \frac{1}{u^\gamma}& \text{in}\,\, B_R(x_0)\\
\\
-\Delta w_{x_0, R}\leq  \frac{1}{w_{x_0, R}^\gamma}& \text{in}\,\, B_R(x_0).
\end{cases}
\end{equation}
For  $\varepsilon>0$, we can use\[(w_{x_0, R}-u -\varepsilon)^+\]
as a test function in \eqref{eq:mang1} obtaining that 

\[w_{x_0, R}\leq u +\varepsilon, \quad \text{for all}\,\, \varepsilon >0.\]
Then $u\geq w_{x_0, R}$ in $B_R(x_0)$, hence 
\[u(x_0)=R^{\frac{2}{\gamma+1}}w(0)=C(R\varphi_1(0))^{\frac{2}{\gamma+1}}.\] Recalling that $R= x_{0,N}$, since $x_0$ is arbitrary,  we obtain the thesis. 
\end{proof}
\begin{prop}\label{thm:lineare}
Under the assumption {\bf (hp)}, there exists a positive constant $C=C(\gamma, \theta, N)$ such that 
\[u(x)\leq C x_N\]
in the set $\mathbb R^N_+\setminus \Sigma_{\bar \lambda}$.
\end{prop}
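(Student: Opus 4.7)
The plan is to construct a one-dimensional strict supersolution that grows linearly in $x_N$ and to run a Caccioppoli-type comparison argument in the unbounded region $\Omega:=\mathbb R^N_+\setminus\Sigma_{\bar\lambda}$, adapting the energy technique of Lemma~\ref{thm:spr}.

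Let $v$ be the one-dimensional solution supplied by Section~\ref{sezode} with $v(0)=0$ and $\lim_{t\to\infty}v'(t)=1$; in particular $v(t)\leq t+C_0$ for some $C_0>0$. For $\beta>1$ and $\varepsilon>0$, set $V_{\beta,\varepsilon}(x):=\beta\,v(x_N+\varepsilon)$. A direct computation, exactly as in the proof of Lemma~\ref{thm:spr}, gives $-\Delta V_{\beta,\varepsilon}=\beta^{\gamma+1}/V_{\beta,\varepsilon}^\gamma>1/V_{\beta,\varepsilon}^\gamma$, so $V_{\beta,\varepsilon}$ is a strict supersolution of \eqref{MP}. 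Choose $\beta\geq\theta/v(\bar\lambda+\varepsilon)$: then $V_{\beta,\varepsilon}(\bar\lambda)\geq\theta\geq u$ on the hyperplane $\{x_N=\bar\lambda\}$ by hypothesis {\bf (hp)}. The goal becomes to prove $u\leq V_{\beta,\varepsilon}$ throughout $\Omega$; once this is done, passing $\varepsilon\to 0^+$ and using $V_{\beta,\varepsilon}(x)\leq\beta(x_N+\varepsilon+C_0)\leq C(\gamma,\theta,N)\,x_N$ on $\Omega$ delivers the claimed linear bound.

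To establish the interior comparison, test with $\varphi_R=(u-V_{\beta,\varepsilon})^+\phi_R^2$, where $\phi_R$ is the radial cutoff in $x'$ from Lemma~\ref{thm:spr}; by construction $\varphi_R$ vanishes on $\{x_N=\bar\lambda\}$ and hence is admissible in $\Omega$. Subtracting the weak formulations for $u$ and $V_{\beta,\varepsilon}$, and using the pointwise inequality
\[
\frac{1}{u^\gamma}-\frac{\beta^{\gamma+1}}{V_{\beta,\varepsilon}^\gamma}\leq-\frac{\beta^{\gamma+1}-1}{V_{\beta,\varepsilon}^\gamma}\quad\text{on }\{u>V_{\beta,\varepsilon}\}
\]
(which follows from the monotonicity of $t\mapsto t^{-\gamma}$ together with $\beta>1$), after Young's inequality one arrives at
\begin{equation*}
(1-\delta)\int_\Omega|\nabla(u-V_{\beta,\varepsilon})^+|^2\phi_R^2\,dx+(\beta^{\gamma+1}-1)\int_\Omega\frac{(u-V_{\beta,\varepsilon})^+}{V_{\beta,\varepsilon}^\gamma}\phi_R^2\,dx\leq\frac{C}{R^2}\int_{\Omega\cap\{R<|x'|<2R\}}\bigl[(u-V_{\beta,\varepsilon})^+\bigr]^2\,dx.
\end{equation*}
The strictly positive sink on the left, a direct gift of the strict supersolution property, replaces the coercivity constant $\eta$ exploited in Lemma~\ref{thm:spr} (for which the boundedness of $u$ on $\Sigma_{\bar\lambda}$ was essential).

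The main obstacle is then to control the right-hand side as $R\to+\infty$, since we have no a priori growth estimate on $u$ in $\Omega$. My plan is to bootstrap: couple the gradient term on the LHS with the Sobolev embedding applied to $(u-V_{\beta,\varepsilon})^+\phi_R$, and combine this with the weighted $L^1$-sink (using that $V_{\beta,\varepsilon}\leq\beta(x_N+\varepsilon+C_0)$, so $V_{\beta,\varepsilon}^{-\gamma}$ is pointwise comparable to $(x_N)^{-\gamma}$ on the support of $\phi_R$); via H\"older's inequality on the annulus $\{R<|x'|<2R\}$, this self-improves the estimate enough to beat the $R^{-2}$ prefactor and force the right-hand side to vanish in the limit. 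Consequently $\int_\Omega|\nabla(u-V_{\beta,\varepsilon})^+|^2\,dx=0$, and together with $(u-V_{\beta,\varepsilon})^+\equiv 0$ on $\{x_N=\bar\lambda\}$ this gives $u\leq V_{\beta,\varepsilon}$ on $\Omega$. Sending $\varepsilon\to 0^+$ and collecting constants yields the asserted bound $u(x)\leq C(\gamma,\theta,N)\,x_N$ on $\mathbb R^N_+\setminus\Sigma_{\bar\lambda}$.
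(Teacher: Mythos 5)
Your construction of the strict supersolution $V_{\beta,\varepsilon}$ is fine as far as it goes, and the pointwise inequality on $\{u>V_{\beta,\varepsilon}\}$ is correct, but the comparison argument you propose does not close, for two structural reasons, and the paper takes a completely different route.

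First, the test function $\varphi_R=(u-V_{\beta,\varepsilon})^+\phi_R^2$ is not compactly supported: $\phi_R$ truncates only in $x'$, while $\Omega=\mathbb R^N_+\setminus\Sigma_{\bar\lambda}$ is unbounded in $x_N$. In Lemma~\ref{thm:spr} the horizontal cutoff sufficed because the domain was a strip of bounded height, whence the integrals in the Caccioppoli inequality were automatically over a region of finite $x_N$-extent and $(u-w_{\beta,\varepsilon})^+$ was bounded there. Here, for fixed $R$, the right-hand side $R^{-2}\int_{\Omega\cap\{R<|x'|<2R\}}[(u-V_{\beta,\varepsilon})^+]^2\,dx$ is an integral over an infinite cylinder in $x_N$, and absent any a priori decay of $(u-V_{\beta,\varepsilon})^+$ there is no reason this is even finite, let alone vanishing as $R\to\infty$. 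You would need a vertical cutoff as well, which reintroduces error terms on $\{x_N\sim S\}$ that you have no way to control without precisely the growth estimate you are trying to prove. Second, the ``sink'' you identify, $(\beta^{\gamma+1}-1)\int(u-V_{\beta,\varepsilon})^+V_{\beta,\varepsilon}^{-\gamma}\phi_R^2$, is not coercive: $V_{\beta,\varepsilon}^{-\gamma}\lesssim x_N^{-\gamma}\to 0$, whereas the strip argument in Lemma~\ref{thm:spr} produced a constant $\eta>0$ uniformly on $\Sigma_{\bar\lambda}$ precisely because $u$ and $w_{\beta,\varepsilon}$ were trapped in a bounded range. So the key mechanism that made Lemma~\ref{thm:spr} work degenerates here, and the Sobolev/H\"older ``bootstrap'' you sketch in the last paragraph is not actually carried out; it is where the whole difficulty lives, and as written it is a plan, not a proof.

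The paper sidesteps all of this. After rescaling to make $u$ bounded on $\Sigma_{(0,2)}$, for a point $x_0$ at height $4R$ it applies the Harnack inequality to the rescaled equation (noting that Lemma~\ref{thm:Lazer} makes the zero-order coefficient uniformly bounded) to get $u\geq C_H^{-1}u(x_0)$ on $\partial B_R(x_0)$, and then compares $u$ from below against a suitably normalized fundamental solution $v_{c,k}$ on the annulus $B_{4R}(x_0)\setminus B_{2R}(x_0)$. Evaluating at the point $(x_0',1)$, which lies in the strip where $u$ is already known to be bounded, yields $u(x_0)\leq CR$. This uses only the lower barrier of Lemma~\ref{thm:Lazer} and interior information, and never requires a global energy estimate in an unbounded region. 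If you want to salvage a supersolution/comparison approach, you would first need an independent a priori growth bound at infinity, which is exactly the content of this proposition; the paper's Harnack-plus-barrier argument is what furnishes it.
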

\begin{proof}
In what follows, without loss of generality, from $\bf(hp)$,  using the natural scaling for the problem \eqref{MP}\begin{equation}\label{eq:ciserveallafine}
u_{\frac{\bar \lambda}{2}}(x)=\left(\frac{\bar \lambda}{2}\right)^{-\frac{2}{\gamma+1}}u\left(\frac{\bar \lambda}{2}x\right),
\end{equation}
we may assume that our solution $u$ is indeed bounded in the strip $\Sigma_{(0,2)}$.\ \\   
Let $x_0\in \mathbb R^+_N$, $x_0=(x_0';x_{0,N})$,  with $x_{0,N}>2$ and let R>0, such that  
\begin{equation}\label{eq:xnscaling}
x_{0,N}=4R.\end{equation}
%
Let $u_R(x)=u(x_0+R(x-x_0))$; then 
\begin{equation}\label{eq:4R}
-\Delta u_R=R^2\frac{1}{u_R^{\gamma+1}}u_R\quad \text{in} \,\, B_4(x_0),
\end{equation}
$u_R>0$ in $B_4(x_0)$.
Since in Lemma \ref{thm:Lazer} we showed that 
\[u\geq C x_N^{\frac{2}{\gamma+1}},\]
in the whole $\mathbb R^N_+$, we infer that 
\[u^{\gamma+1}_R(x)\geq 4C^{\gamma+1} R^2\quad\text{in}\,\, B_2(x_0),\]
where $C$ is the positive constant given in Lemma \ref{thm:Lazer}. Therefore 
\[c(x):=\frac{R^2}{u_R^{\gamma+1}}\leq \frac{R^2}{4C^{\gamma+1}R^2}\quad  \text{in}\,\, B_2(x_0).\]
We point out that, from  the arbitrariness  of $x_0$, we deduce that 
\[c(x) \leq C(\gamma)\quad  \text{in}\,\,\Sigma_{(\frac52,\frac{11}{2})}.\]
Consequently from \eqref{eq:4R}, we deduce that 
\[-\Delta u_R= c(x) u_R\quad \text{in}  \,\,B_2(x_0).\]
By Harnack inequality \cite[Theorem 8.20]{GT} we have that 
\begin{equation}\label{eq:mad12-1}
\sup_{B_1(x_0)} u_R\leq C_H \inf _{B_1(x_0)} u_R,\end{equation}
where $C_H=C_H(\gamma,N)$.
Now let us consider, for $N\geq 3$, the fundamental solution of the Laplace operator. So let us define 
\[v_{c,k}=c\left (\frac{1}{|x-x_0|^{N-2}}+k\right)\]
that fulfills 
\[\Delta v_{c,k}=0\quad \text{in}\,\, \mathbb{R}^N\setminus\{x_0\},\]
for all $c,k\in\mathbb R$. Exploiting \eqref{eq:mad12-1} with $u_0=u(x_0)$, we infer that 
\[u_0\leq\sup_{B_R(x_0)}u=\sup_{B_1(x_0)} u_R  \leq C_H  \inf _{B_1(x_0)} u_R=C_H\inf _{B_R(x_0)} u\leq C_H u(x), \]
hence
\[u(x)\geq C^{-1}_Hu_0\quad \text{on}\,\, \partial B_R(x_0).\] 
We new choose $c$ amd $k$ such that 
\begin{equation}\label{eq:mad12-2}
\begin{cases}
v_{c,k}=C^{-1}_Hu_0 & \text{on}\,\, \partial B_{2R}(x_0) \\
v_{c,k}=0 & \text{on}\,\, \partial B_{4R}(x_0). 
\end{cases}
\end{equation}
Direct computation shows that the system \eqref{eq:mad12-2} holds for 
\begin{equation}\label{eq:mad12-2_1}
c=\frac{C^{-1}_Hu_0(4R)^{N-2}}{2^{N-2}-1}:=\tilde c_N u_0 R^{N-2}\quad \text{and}\quad k=-\frac{1}{(4R)^{N-2}},\end{equation}
with  $\tilde c_N=C^{-1}_H4^{N-2}/({2^{N-2}-1})$.
Summarizing we have that  
\begin{equation}\label{eq:probfondu}
\begin{cases}
\displaystyle -\Delta u=\frac{1}{u^\gamma}\geq 0 & \text{in}\,\, B_{4R}(x_0)\setminus B_{2R}(x_0)\\
-\Delta v_{c,k}=0 & \text{in}\,\, B_{4R}(x_0)\setminus B_{2R}(x_0)\\
u,v_{c,k}>0 & \text{in}\,\, B_{4R}(x_0)\setminus B_{2R}(x_0).
\end{cases}
\end{equation}
Using $(v_{c,k}-u-\varepsilon)^+$, for $\varepsilon >0$,  as test function in \eqref{eq:probfondu} (see also \eqref{eq:mad12-2}), we get 
\[\int_{B_{4R}(x_0)\setminus B_{2R}(x_0)}|\nabla (v_{c,k}-u-\varepsilon)^+|^2\, dx\leq 0,\]
namely $v_{c,k}\leq u+\varepsilon,$ for all $\varepsilon >0$.
Therefore \begin{equation}\label{eq:mad12-3}
u(x)\geq v_{c,k}\quad \text{in}\,\, B_{4R}(x_0)\setminus B_{2R}(x_0).
\end{equation}
Therefore
\begin{eqnarray}\nonumber
&&u(x_0',1)\geq v_{c,k}(x_0',1)\\\nonumber
&&=c\left (\frac{1}{|(x_0',1)-(x_0', x_{0,N})|^{N-2}} +k\right)=c\left (\frac{1}{|1-4R|^{N-2}} +k\right)\\\nonumber
&& =\tilde c_N u_0 R^{N-2}\left (\frac{1}{(4R-1)^{N-2}}- \frac{1}{(4R)^{N-2}}\right),
\end{eqnarray}
where in the last line we used  \eqref{eq:mad12-2_1}.
Finally by Lagrange theorem ve have 
\[u(x_0',1)\geq \tilde c_N\frac{(N-2) u_0}{4^{N-1}R}.\]
Therefore,  since  $u\in L^{\infty}(
\Sigma_{(0,2)})$,    we deduce
\[u(x_0)=u_0\leq C R,\]
for some constant $C=C(\gamma,\bar\lambda, \theta, N)$ that does not depend  on $R$. Since $x_0$ is arbitrary  we obtain that 
\[u(x)\leq C R\quad \text{in} \,\, \{x\in \mathbb R^N_+\,:\,  x_N > 2\}.\]
Scaling back, using \eqref{eq:ciserveallafine} and \eqref{eq:xnscaling} we obtain the thesis for $N\geq 3$.
The case $N=2$ follows repeating the same argument but replacing the fundamental solutions with the logarithmic one.
\end{proof}

\noindent It is straightforward to deduce the following 
\begin{cor}
Under the assumption {\bf (hp)}, $u$ has linear growth, namely there exits $c_1,c_2>0$ depending on $\gamma, \theta, N$ such that 
\[u(x)\leq c_1+c_2x_N.\]
\end{cor}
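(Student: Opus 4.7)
The corollary is a direct synthesis of the two preceding estimates, so the plan is essentially to glue them together and absorb the remaining boundary behavior into the additive constant $c_1$. I would split $\mathbb R^N_+$ into the strip $\Sigma_{\bar\lambda}$ and its complement.

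On $\Sigma_{\bar\lambda}$, hypothesis $\bf (hp)$ gives $u\le \theta$; equivalently, one can invoke Lemma \ref{thm:spr} and use that $x_N\le \bar\lambda$ there, so $u(x)\le C\,x_N^{2/(\gamma+1)}\le C\,\bar\lambda^{2/(\gamma+1)}$. Either way, $u$ is bounded in the strip by a constant depending only on $\gamma,\theta,\bar\lambda$, which is absorbed into $c_1$. On $\mathbb R^N_+\setminus\Sigma_{\bar\lambda}$, Proposition \ref{thm:lineare} directly yields $u(x)\le C\,x_N$ with $C=C(\gamma,\theta,N)$, so we take $c_2=C$.

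Finally I would combine: setting $c_1$ equal to the constant upper bound in the strip and $c_2$ equal to the constant from Proposition \ref{thm:lineare}, in the strip $u(x)\le c_1\le c_1+c_2x_N$ since $x_N\ge 0$, and in the complement $u(x)\le c_2x_N\le c_1+c_2x_N$. Hence $u(x)\le c_1+c_2x_N$ on all of $\mathbb R^N_+$, which is the claim.

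There is really no obstacle here: both constituent estimates are already in hand, and the only thing to notice is that the sublinear bound $x_N^{2/(\gamma+1)}$ from Lemma \ref{thm:spr} is irrelevant at infinity (it is only used implicitly via the fact that $u$ is bounded on the strip) and that on the bounded strip it degenerates to a constant, so the linear bound from Proposition \ref{thm:lineare} dominates the growth at infinity while the additive $c_1$ covers the behavior near the boundary.
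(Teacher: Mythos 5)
Your argument is correct and is exactly the "straightforward" deduction the paper has in mind: on the strip $\Sigma_{\bar\lambda}$ the bound from $\bf (hp)$ (or Lemma \ref{thm:spr} with $x_N\le\bar\lambda$) gives a constant absorbed into $c_1$, and on the complement Proposition \ref{thm:lineare} gives the linear bound $c_2 x_N$; combining the two yields $u\le c_1+c_2x_N$ on all of $\mathbb R^N_+$.
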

\begin{prop}\label{thm:gradbound} Under the assumption {\bf (hp)}, there exists $C=C(\gamma,  \theta,  N)$ such that the following hold
\begin{eqnarray}\nonumber
&(i)&\,\, |\nabla u|\leq C x_N^{\frac{1-\gamma}{\gamma +1}}\quad \text{in} \,\, \Sigma_{\bar\lambda},
\\\nonumber and \\\nonumber
&(ii)&|\nabla u|\leq C\quad \text{in} \,\, \mathbb R^N_+\setminus\Sigma_{\bar\lambda}.
\end{eqnarray}
\end{prop}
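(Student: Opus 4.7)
The plan is to deduce both gradient bounds from a rescaling-plus-interior-regularity argument which directly consumes the sharp two-sided bounds for $u$ already established in Lemmas \ref{thm:spr} and \ref{thm:Lazer} and Proposition \ref{thm:lineare}. The single analytical tool is the interior $C^{1,\alpha}$ (equivalently, $W^{2,p}$) estimate for $-\Delta v = f$ with $v, f \in L^\infty(B_1(0))$, so the real task is to pick, in each of the two regions, a rescaling of \eqref{MP} which produces a uniformly bounded solution and a uniformly bounded right-hand side on the unit ball.

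\textbf{Part (i).} For $x_0 \in \Sigma_{\blambda}$ I would set $R := x_{0,N}/2$ so that $B_R(x_0) \subset \Sigma_{\blambda} \subset \Rn$, and introduce the scale-invariant rescaling
\[ v(y) := R^{-2/(\gamma+1)}\, u(x_0 + Ry), \qquad y \in B_1(0), \]
already used in \eqref{eq:ciserveallafine}, which again solves $-\Delta v = v^{-\gamma}$ on $B_1(0)$. Since every $x \in B_R(x_0)$ has $x_N \in [R, 3R]$, Lemmas \ref{thm:spr} and \ref{thm:Lazer} immediately pinch the rescaled function:
\[ 0 < c_1 \leq v \leq c_2 \quad \text{on } B_1(0), \]
with $c_1, c_2$ depending only on $\gamma, \theta$. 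Therefore $v^{-\gamma} \in L^\infty(B_1(0))$ with a uniform bound, and the interior regularity estimate yields $|\nabla v(0)| \leq C(\gamma,\theta)$. Undoing the scaling, $|\nabla u(x_0)| = R^{(1-\gamma)/(\gamma+1)}\, |\nabla v(0)| \leq C\, x_{0,N}^{(1-\gamma)/(\gamma+1)}$, which is exactly (i).

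\textbf{Part (ii).} Here the scaling must match the linear growth at infinity, rather than the boundary behaviour. For $x_0 \in \Rn \setminus \Sigma_{\blambda}$ I would set $R := x_{0,N}/2 \geq \blambda/2$ and use the linear rescaling
\[ w(y) := R^{-1}\, u(x_0 + Ry), \qquad y \in B_1(0), \]
which satisfies $-\Delta w = R^{1-\gamma}\, w^{-\gamma}$ on $B_1(0)$. The preceding Corollary bounds $w \leq C$ uniformly (using $R \geq \blambda/2$ to absorb the constant term), while Lemma \ref{thm:Lazer} gives $w \geq c\, R^{(1-\gamma)/(\gamma+1)}$. A short computation then shows
\[ R^{1-\gamma}\, w^{-\gamma} \leq C\, R^{(1-\gamma)/(\gamma+1)}, \]
which is bounded precisely because $\gamma > 1$ makes the exponent negative while $R$ stays bounded below. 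Interior regularity again delivers $|\nabla w(0)| \leq C$, and since the choice of the linear rescaling makes $\nabla_y w(0) = \nabla_x u(x_0)$, the desired bound $|\nabla u(x_0)| \leq C$ follows.

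\textbf{Main obstacle.} The substantive work is already done in the previous statements, so the argument is largely bookkeeping. The one genuine subtlety is the different choice of rescaling in the two regions: the scale-invariant exponent $2/(\gamma+1)$ must be used near the boundary, where $u$ vanishes like $x_N^{2/(\gamma+1)}$, whereas the linear exponent must be used far from the boundary, where $u$ grows at most linearly; it is precisely the assumption $\gamma > 1$ which ensures that the rescaled source term in (ii) is bounded and not exploding.
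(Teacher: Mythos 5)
Your proposal is correct and follows essentially the same strategy as the paper's proof: in both regions one rescales by the natural exponent ($2/(\gamma+1)$ near the boundary, $1$ far from it), uses Lemmas \ref{thm:spr}, \ref{thm:Lazer} and Proposition \ref{thm:lineare} to obtain uniform $L^\infty$ control of the rescaled solution and its source term, and then invokes interior regularity estimates before scaling back. The only differences from the paper (centering the rescaled ball at $x_0$ rather than at $P/R$, taking $R=x_{0,N}/2$ rather than $R=x_{0,N}$ with smaller balls, and writing out the exponent computation in part (ii)) are cosmetic.
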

\begin{proof}
	Let us start noticing that, without loss of generality, we may and do assume that the solution is bounded in the strip $\Sigma_{2\bar\lambda}$.
 Let  now $P \in \Sigma_{\bar\lambda}$,  with $P=(x',x_N)$. Set $R =x_N$ and let us define 
\[u_R(x) =R^{-\frac{2}{\gamma+1}}{u(R x)}\quad \text{in}\,\,B_{\frac 12}\left(\frac{P}{R}\right).\]
Then $u_R$ satisfies 
\[-\Delta u_R =\frac{1}{u_R^\gamma}\quad \text{in}\,\,B_{\frac 12}\left(\frac{P}{R}\right).\] Exploiting Lemma  \ref{thm:Lazer}, we deduce that 
\[\frac{1}{u_R^\gamma}=\left(\frac{R^{\frac{2}{\gamma+1}}}{u(R x)}\right)^{\gamma}\leq \frac{4^{\frac{\gamma}{\gamma+1}}}{C^\gamma}\left(\frac{R^{\frac{2}{\gamma+1}}}{R^{\frac{2}{\gamma+1}}}\right)^{\gamma}:= C,\]
with $C=C(\gamma)$, i.e. $1/u_\delta^\gamma\in L^{\infty}(B_{1/2} \left({P}/{R}\right)).$ On the other hand Lemma \ref{thm:spr} we also get 
\[u_R(x)=R^{-\frac{2}{\gamma+1}}{u(R x)}\leq C,\]
where $C=C(\gamma, \bar\lambda)$.
By regularity estimates, see e.g. \cite[Theorem 3.9]{GT}
\[|\nabla u_R (x) |\leq C(\gamma, N) \quad \text{in}\,\,B_{\frac 14} \left(\frac{P}{R}\right).\] Consequently  we deduce
\[|\nabla u(R x) |\leq C R^{\frac{1-\gamma}{\gamma +1}}\quad \text{in}\,\,B_{\frac14} \left(\frac{P}{R}\right)\]
and  hence
\[|\nabla u(x) |\leq C R^{\frac{1-\gamma}{\gamma +1}}\quad \text{in}\,\,B_\frac{R}{4} (P),\]
thus proving $(i)$.
Arguing now in the same way, let us define 
\[u_R(x)=\frac{u(Rx)}{R} \quad \text{in}\,\,B_{\frac 12}\left(\frac{P}{R}\right).\] 
By Proposition \ref{thm:lineare} we have that $u_R\leq C(\gamma,\theta, N)$ and it satisfies 
\[-\Delta u_R=\frac{R^2}{R^{\gamma+1}}\frac{1}{u_R^\gamma}:=h  \quad \text{in}\,\,B_{\frac 12}\left(\frac{P}{R}\right),\]
where $h(x)\leq C(\gamma)$ in $B_{1/2}\left({P}/{R}\right)$ (see Lemma \ref{thm:Lazer}). By regularity estimates $|\nabla u_R|\leq C$ in $B_{1/4}\left({P}/{R}\right)$ and therefore $|\nabla u(x)|\leq C$ in $B_{R/4}\left({P}\right)$.
\end{proof}
We are now ready to prove the $1-D$ symmetry result.
\begin{thm}\label{thm1D}    Let u be a solution to \eqref{MP}. Under the assumption ${\bf (hp)}$ 
\[u(x)=u(x',x_N)=u(x_N)\quad \text{in}\,\,\mathbb R^N_+.\]
\end{thm}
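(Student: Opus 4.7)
The plan is to deduce the $1$-D dependence by a sliding argument in the horizontal directions. For every unit vector $e\in\mathbb R^{N-1}\times\{0\}$ and every $t\in\mathbb R$, the translate $u^t(\cdot):=u(\cdot+te)$ is again a solution of \eqref{MP}; I aim to show $u^t\equiv u$ in $\mathbb R^N_+$, from which the arbitrariness of $e,t$ yields $u(x',x_N)=u(x_N)$.

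To this end, set $v:=u^t-u$. The strict monotonicity of $s\mapsto s^{-\gamma}$ gives $-\Delta v=(u^t)^{-\gamma}-u^{-\gamma}<0$ on the open set $\{v>0\}$, so that $w:=v^+$ is distributionally subharmonic in $\mathbb R^N_+$, non-negative, and vanishes on $\partial\mathbb R^N_+$. The a priori estimates of this section make $w$ globally bounded: in $\Sigma_{\bar\lambda}$, Lemma~\ref{thm:spr} gives $u,u^t\leq C\bar\lambda^{2/(\gamma+1)}$, hence $|v|\leq C$; while in $\mathbb R^N_+\setminus\Sigma_{\bar\lambda}$ the Lipschitz bound $|\nabla u|\leq C$ from Proposition~\ref{thm:gradbound}(ii), integrated along the horizontal segment $[x,x+te]$ (which lies entirely in this region), yields $|v|\leq C|t|$.

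I would then conclude that $w\equiv 0$ by a cut-off test-function argument in the spirit of the proof of Lemma~\ref{thm:spr}. Using $w\,\phi_R^2\psi_M^2$ as a test function in the difference of the weak formulations satisfied by $u$ and $u^t$ --- with $\phi_R$ the horizontal cut-off \eqref{eq:esam1} and $\psi_M$ a smooth vertical cut-off equal to $1$ on $\{x_N<M\}$ and supported in $\{x_N<2M\}$ --- the $L^\infty$-bound on $u$ over the support of $\psi_M$ (provided by Proposition~\ref{thm:lineare}) yields, via the mean value theorem applied to $s\mapsto s^{-\gamma}$, a strictly negative factor multiplying $w^2$. This damping absorbs the $O(R^{-2})$ cut-off error coming from $|\nabla\phi_R|$ exactly as in \eqref{eq:mad 2}--\eqref{eq:mad 3}. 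Sending $R\to\infty$ (Fatou, as in Lemma~\ref{thm:spr}) and then $M\to\infty$ (using the global $L^\infty$-bound on $w$ to control the boundary term coming from $\psi_M'$) gives $w\equiv 0$, i.e., $u^t\leq u$. Swapping the role of $e$ with $-e$ yields $u^t\geq u$, and therefore $u^t\equiv u$.

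The main technical obstacle is precisely this two-parameter cut-off limit: since $u$ grows linearly, the damping constant produced by the mean value theorem on $\Sigma_M$ degenerates as $M\to\infty$, so the rate of the limits $R\to\infty$, $M\to\infty$ must be chosen carefully. This is exactly where the sharp asymptotic estimates of this section come in (boundary decay in Lemma~\ref{thm:spr}, linear growth in Proposition~\ref{thm:lineare} and its corollary, and gradient bounds in Proposition~\ref{thm:gradbound}): together they ensure that the boundary errors from $\psi_M'$ are matched by the damping and that the argument closes. Equivalently, this final step may be phrased as a direct application of the Berestycki--Caffarelli--Nirenberg $1$-D symmetry result \cite{BeCaNi}, once the quantitative behavior of $u$ above has been established.
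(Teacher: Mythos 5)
The proposal misses the key device the paper uses to neutralize the boundary singularity: the \emph{vertical} shift. You slide only horizontally ($u^t(\cdot)=u(\cdot+te)$, $e\perp e_N$), set $v=u^t-u$, and try to compare $u^t$ with $u$ directly on all of $\mathbb R^N_+$ via a test function of the form $(u^t-u)^+\phi_R^2\psi_M^2$. This runs into two problems near $\{x_N=0\}$ that the proposal does not resolve. First, the mean-value coefficient $c(x)=\bigl((u^t)^{-\gamma}-u^{-\gamma}\bigr)/(u^t-u)\sim -\gamma\,\xi^{-\gamma-1}$ with $u<\xi<u^t\sim x_N^{2/(\gamma+1)}$ blows up like $x_N^{-2}$, so the term $\int c(x)\,[(u^t-u)^+]^2$ is not obviously finite; combined with the gradient bound $|\nabla u|\lesssim x_N^{(1-\gamma)/(\gamma+1)}$ from Proposition \ref{thm:gradbound}(i), one sees that for $\gamma\ge 3$ the test function $(u^t-u)^+\phi_R^2\psi_M^2$ is not even in the natural energy space, so the weak formulation cannot be tested against it. Second, and more fundamentally, $u^{-\gamma}\notin L^1_{\mathrm{loc}}$ up to $\{x_N=0\}$ when $\gamma>1$ (since $u^{-\gamma}\sim x_N^{-2\gamma/(\gamma+1)}$ and $2\gamma/(\gamma+1)>1$), so the distributional subharmonicity of $(u^t-u)^+$ \emph{on the closed half-space}, which underlies both the cut-off argument and the Phragm\'en--Lindel\"of reading of it, is not established.

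The paper avoids both issues by translating in the direction $\tau e_i+\sigma e_N$ with $\sigma>0$. Then the two-sided bound $C_1 x_N^{2/(\gamma+1)}\le u\le C_2 x_N^{2/(\gamma+1)}$ (Lemmas \ref{thm:spr} and \ref{thm:Lazer}) gives a thin strip $\Sigma_{\hat\lambda}$ in which $u<\rho<2\rho<u_{\tau,\sigma}$ \emph{automatically}, so no comparison argument is needed there. One then works only in $D=\mathbb R^N_+\setminus\Sigma_{\hat\lambda}$, where $u,u_{\tau,\sigma}$ are bounded away from zero, $c(x)$ is continuous, bounded and nonpositive, and $z=u-u_{\tau,\sigma}$ is bounded (by the gradient bounds and the linear growth). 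In this setting the Berestycki--Caffarelli--Nirenberg maximum principle \cite[Lemma 2.1]{BeCaNi} — a max principle for bounded solutions of $\Delta z+c(x)z\ge 0$ in domains contained in a half-space with $c$ bounded, not a ``1-D symmetry result'' as your last sentence suggests — applies directly to give $z\le 0$, and one finally lets $\sigma\to 0^+$. This extra shift is precisely what your argument lacks; you acknowledge that the damping degenerates as $M\to\infty$, but the real difficulty is at $x_N\to 0^+$, and there the vertical shift is the idea that closes the proof.
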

\begin{proof}
Let $\tau,\sigma \in \mathbb R$, with $\sigma>0$, chosen opportunely later. Define $$u_{\tau,\sigma}=u(x+\tau e_i+\sigma e_N),$$ and  for $i=1,\ldots, N-1$. Obviously $-\Delta u_{\tau,\sigma} =1/u^\gamma_{\tau,\sigma}$ in $\mathbb R^N_+$. Setting $z:=u-u_{\tau,\sigma}$, we get
\begin{equation}\label{eq:raf}
-\Delta z=\frac{1}{u^\gamma}-\frac{1}{u_{\tau,\sigma}^\gamma}\quad\text{in}\,\,\mathbb R^N_+.
\end{equation}
In the following we  use \cite[Lemma 2.1]{BeCaNi}. From Lemma \ref{thm:spr} and Lemma  \ref{thm:Lazer} we infer that there exist constants $C_1, C_2$ such that
\begin{equation}\label{eq:doppstim}
C_1 x_N^{\frac{2}{\gamma+1}}\leq u(x)\leq C_2 x_N^{\frac{2}{\gamma+1}}\quad\text{in}\,\,\Sigma_{\bar\lambda}.
\end{equation} 
For $\sigma >0$, by \eqref{eq:doppstim} there exists $\rho>0$ and $\hat \lambda<\bar \lambda$ (actually think to $\hat \lambda\approx 0$) such that $u<\rho$ in $\Sigma_{\hat \lambda}$ and  $u_{\tau,\sigma}>2\rho$ in $\Sigma_{\hat \lambda}$, for all $\tau \in \mathbb R$. Defining the strip $D:=\mathbb R^N_+\setminus \Sigma_{\hat\lambda}$,  $z\leq 0$ on $\partial D$ holds. Moreover using Lagrange theorem jointly  to Proposition \ref{thm:gradbound}, we also get that  $z$ is bounded in $\overline D$.
 
\noindent Setting
\[c(x):= \left(\frac{1}{u^\gamma}-\frac{1}{u_{\tau,\sigma}^\gamma}\right)\frac{1}{u-u_{\tau,\sigma}},\]
we observe that $c(x)$ is continuous in $\overline D$ (indeed $u,u_{\tau,\sigma}\geq c>0$ in $D$, see \eqref{eq:doppstim}) and $c(x)\leq 0$ (in $D$) by its own definition. By \eqref{eq:raf} applying \cite[Lemma 2.1]{BeCaNi} to the problem
\begin{equation*}
\begin{cases}
\Delta z+c(x) z\geq 0 & \text{in}\,\, D\\
z\leq 0 & \text{on}\,\, \partial D,
\end{cases}
\end{equation*}
we obtain $z:=u-u_{\tau,\sigma}\leq 0$ in $D.$ We point out that already in $\Sigma_{\hat \lambda}\cup \{x_n=\hat \lambda\}$,  we have   $u-u_{\tau,\sigma}\leq 0$. 
Hence $u\leq u_{\tau,\sigma}$ in  $\mathbb R^N_+$. \ \\
Letting $\sigma \rightarrow 0$ we obtain 
 \[u\leq u_{\tau}\quad \text{for all}\,\,  \tau \in \mathbb R.\]
By the arbitrariness of $\tau$ we deduce that $u=u(x_N)$.
\end{proof}

\section{ODE Analysis}\label{sezode}
We start with the study of the one dimensional problem. We consider the following
\begin{eqnarray}\label{ODE}
	\begin{cases}
		\displaystyle - u''=\frac{1}{u^\gamma} & \text{in}\,\,\mathbb{R}_+\\
		u(t)>0 &  \text{in}\,\,\mathbb{R}_+\\
		u(0)=0.
	\end{cases}
\end{eqnarray}
It is straighforward to verify that the function
\begin{equation}\label{eq:esam3}
	u(t)=C_\gamma \ t^{\frac{2}{\gamma+1}}
\end{equation}
where 
\begin{equation}\label{eq:pen2}
	C_\gamma=\frac{(\gamma+1)^\frac{2}{\gamma+1}}{(2\gamma-2)^\frac{1}{\gamma+1}},
\end{equation}
is a solution of (\ref{ODE}).

\

\noindent {\bf A scaling argument.} Let $v\in C^2(\mathbb{R}_+)\cap C(\overline{\mathbb{R}_+})$ be a solution of problem (\ref{ODE}). Let 
\begin{equation}\label{rem:scaling}
	\sigma(t)=v_{\alpha,\lambda}(t):=\lambda^\alpha v(\lambda t),\end{equation} for a given $\lambda>0$ and $\alpha\in\mathbb{R}$. Then $\sigma(0)=0$, $\sigma(t)>0$ and for $t>0$
$$
\sigma''(t)=-\frac{1}{\sigma(t)^\gamma}\lambda^ {\alpha(1+\gamma)+2}.
$$
Choosing $\alpha=-{2}/{(1+\gamma)}$, then $\sigma$ satisfies (\ref{ODE}) too.  A similar computation showed that the same scaling works in the main problem \eqref{MP}.

\

Let us define, by means of \eqref{eq:esam3}, the function
$$
w(t):=C_\gamma\left(t+t^{\frac{2}{\gamma+1}}\right)=C_\gamma t+u(t)
$$
and notice that, since $u(t)<w(t)$ for $t>0$, 
\[\displaystyle w''(t)=u''(t)=-u(t)^{-\gamma}<-w(t)^{-\gamma}.\] Since $w(0)=u(0)=0$, $w(t)>0$ and
$$
-w''(t)\geq \frac{1}{w(t)^\gamma}\quad t>0,
$$
then $w$ is a supersolution for problem (\ref{ODE}). Moreover $w'(t)\to C_\gamma$ as $t\to +\infty$.

Taking into account the supersolution $w$, let us fix $t_0>0$ and consider the following problem
\begin{eqnarray}\label{ODE2}
	\begin{cases}
		\displaystyle - v''=\frac{1}{v^\gamma} & t>t_0\\
		v(t_0)>w(t_0)\\
		v'(t_0)>w'(t_0).
	\end{cases}
\end{eqnarray}
\begin{prop}\label{pro:3}
	Each solution of problem (\ref{ODE2}) is such that $v(t)>w(t)$ for $t\geq t_0$ and there exists (finite) $\displaystyle \lim_{t\to \infty}v'(t)\geq C_\gamma$.
\end{prop}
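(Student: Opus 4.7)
The plan is a first-contact/comparison argument against the supersolution $w$, followed by a concavity/monotonicity analysis of $v'$ to extract the limit.

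Set $z(t) := v(t) - w(t)$, so that by the initial data in \eqref{ODE2} we have $z(t_0) > 0$ and $z'(t_0) > 0$. Let
\[
T^{\ast} := \sup\{T > t_0 : z > 0 \text{ on } [t_0, T)\},
\]
which exceeds $t_0$ by continuity. On $[t_0, T^{\ast})$ we have $v > w > 0$, hence $1/v^\gamma < 1/w^\gamma$, and combining the equation $-v'' = 1/v^\gamma$ with the supersolution inequality $-w'' \geq 1/w^\gamma$ gives
\[
z''(t) = v''(t) - w''(t) \geq \frac{1}{w(t)^\gamma} - \frac{1}{v(t)^\gamma} > 0 \qquad \text{on } [t_0, T^{\ast}).
\]
Thus $z'$ is strictly increasing, so $z'(t) \geq z'(t_0) > 0$, hence $z$ is strictly increasing, hence $z(t) \geq z(t_0) > 0$. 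If $T^{\ast}$ were finite, continuity would force $z(T^{\ast}) = 0$, contradicting this positive lower bound; therefore $T^{\ast} = +\infty$ and $v(t) > w(t)$ throughout the maximal interval of existence. Global existence on $[t_0, \infty)$ then follows: concavity of $v$ (since $v'' = -1/v^\gamma < 0$) gives $v(t) \leq v(t_0) + v'(t_0)(t - t_0)$, ruling out finite-time blow-up, while $v > w > 0$ rules out extinction, so a standard continuation argument extends $v$ to $[t_0, \infty)$.

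For the limit, note that $v'$ is monotone decreasing on $[t_0, \infty)$ by concavity. The comparison inequality $z'(t) \geq z'(t_0) > 0$, now valid for every $t \geq t_0$, rewrites as $v'(t) \geq w'(t) + z'(t_0)$. A direct computation from the explicit formula $w(t) = C_\gamma t + C_\gamma t^{2/(\gamma+1)}$ yields
\[
w'(t) = C_\gamma + \frac{2 C_\gamma}{\gamma + 1}\, t^{-\frac{\gamma - 1}{\gamma + 1}},
\]
which is $\geq C_\gamma$ for every $t > 0$ and tends to $C_\gamma$ as $t \to \infty$ (since $\gamma > 1$). Hence $v'(t) \geq C_\gamma + z'(t_0) > C_\gamma$ uniformly on $[t_0, \infty)$. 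A decreasing function bounded below admits a finite limit, so $\lim_{t \to \infty} v'(t)$ exists in $\mathbb{R}$ and satisfies $\lim_{t \to \infty} v'(t) \geq C_\gamma$, as claimed.

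The only delicate point is the first-contact argument; it works cleanly here precisely because the strict ordering $v'(t_0) > w'(t_0)$ built into \eqref{ODE2} propagates via the sign of $z''$, and the remainder of the proposition reduces to elementary calculus thanks to the explicit form of $w$.
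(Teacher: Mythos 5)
Your proof is correct and follows essentially the same route as the paper's: a comparison of $v$ against the explicit supersolution $w$, driven by the differential inequality $(v-w)'' \geq 1/w^\gamma - 1/v^\gamma > 0$ valid while $v>w$, which propagates the strict ordering from the initial data at $t_0$ and yields $v' > w' \geq C_\gamma$, hence the finite limit. The only cosmetic difference is that you run the first-contact argument on the sign of $v-w$ while the paper runs it on the sign of $(v'-w')'$; both hinge on the same monotonicity chain, and your handling of global existence (concavity rules out blow-up, $v>w>0$ rules out extinction) is if anything a touch more explicit than the paper's.
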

\begin{proof}
	A unique local solution for problem (\ref{ODE2}) there exists; indeed, it can be proved, that   the solution is defined in the whole $[t_0, +\infty)$ since it is concave. Moreover
	$$
	\left(v'(t_0)-w'(t_0)\right)'\geq \frac{1}{w(t_0)^\gamma}-\frac{1}{v(t_0)^\gamma}.
	$$ 
	Since $\left(v'(t_0)-w'(t_0)\right)'>0$,  there exists $\delta>0$ such that for all $t\in [t_0,t_0+\delta)$,  $\left(v'(t)-w'(t)\right)'>0$.
	Actually  $\left(v'(t)-w'(t)\right)'>0$ for each $t>t_0$; if not, denoting by $\tau:=\sup \{t>t_0:\left(v'(t)-w'(t)\right)'>0\}$, it follows that
	\begin{equation*}
		0= (v'(\tau)-w'(\tau))'\geq \frac{1}{w(\tau)^\gamma}-\frac{1}{v(\tau)^\gamma},
	\end{equation*}
	hence
	\begin{equation}\label{contr}
		\frac{1}{v(\tau)^\gamma}\geq\frac{1}{w(\tau)^\gamma}.
	\end{equation}
	Since $(v'-w')$ is continuous and (strictly) increasing on the interval $[t_0,\tau)$, and $v(t_0)>w(t_0)$,  therefore $v(\tau)>w(\tau)$.  This contradict (\ref{contr}).
	As a consequence, $v(t)>w(t)$ for $t\geq t_0$.
	
	The solution of (\ref{ODE2}) is positive on $[t_0,+\infty)$, therefore $v''(t)$ is negative on the same interval. This implies that $v'(t)$ is decreasing and its limit there exists for $t\to\infty$.
	Thus $\displaystyle \lim_{t\to \infty}v'(t)\geq \lim_{t\to \infty}w'(t)= C_\gamma$.
\end{proof}
\begin{lem}
	For any $L\in \mathbb{R}_+$, there exists a solution $\tilde{v}$ for the problem
	\begin{eqnarray}\label{ODE4}
		\begin{cases}
			\displaystyle - v''=\frac{1}{v^\gamma} & t>0\\
			v(t)>0 &t>0 \\
			v(0)=0 \displaystyle \lim_{t\to+\infty}v'(t)=L.
		\end{cases}
	\end{eqnarray}
\end{lem}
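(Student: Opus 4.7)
The plan is to exploit the scaling invariance from \eqref{rem:scaling}: once a single solution is built with some positive asymptotic slope $L^\ast$, every other $L>0$ is reached by rescaling, while the explicit power solution \eqref{eq:esam3} already handles $L=0$ since $u'(t)\sim t^{(1-\gamma)/(\gamma+1)}\to 0$ for $\gamma>1$.

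To produce the seed solution with $L^\ast>0$, I would fix $t_0>0$, pick initial data $a>w(t_0)$ and $b>w'(t_0)$, and apply Proposition \ref{pro:3}: this yields a forward-global solution $v$ on $[t_0,+\infty)$ with a finite limit $L^\ast:=\lim_{t\to\infty}v'(t)\geq C_\gamma>0$. The crucial step is to continue the same $v$ backward until it hits zero. Multiplying the ODE by $v'$ and integrating, using $v\to+\infty$ and $v'\to L^\ast$ at $+\infty$, yields the energy identity
$$
\frac{(v')^2}{2}-\frac{v^{1-\gamma}}{\gamma-1}=\frac{(L^\ast)^2}{2}.
$$
Since $v''<0$, the derivative $v'$ is monotonically decreasing, so $v'\geq L^\ast>0$ along the whole maximal interval of existence; hence $v$ is strictly increasing and cannot break down backward at any positive value. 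Parametrising the backward trajectory by $dt=dv/v'$, the time needed to bring $v$ down to $0$ equals
$$
\int_0^a\frac{dv}{\sqrt{(L^\ast)^2+2v^{1-\gamma}/(\gamma-1)}},
$$
whose integrand behaves like $\sqrt{(\gamma-1)/2}\,v^{(\gamma-1)/2}$ near $v=0$ and is integrable because $\gamma>1$. Hence $v$ reaches $0$ at a finite time $t^\ast$, and the translate $\bar v(t):=v(t+t^\ast)$ solves \eqref{ODE4} with $L=L^\ast$. For an arbitrary target $L>0$ I would then set $\tilde v(t):=\lambda^{-2/(\gamma+1)}\bar v(\lambda t)$ with $\lambda:=(L/L^\ast)^{(\gamma+1)/(\gamma-1)}$; by \eqref{rem:scaling} this $\tilde v$ still satisfies the ODE, vanishes at $t=0$, and has $\lim_{t\to\infty}\tilde v'(t)=\lambda^{(\gamma-1)/(\gamma+1)}L^\ast=L$.

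The hard part is the backward extension: the nonlinearity is singular precisely at the target $v=0$, so a priori the solution could develop infinite slope before ever reaching the boundary, or conversely never reach it. The conserved energy settles both issues in one stroke, quantifying the time-to-zero by a convergent integral and simultaneously forcing $v'\geq L^\ast$ along the maximal interval to preclude any earlier breakdown.
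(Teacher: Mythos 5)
Your proof is correct, but the backward-extension step is handled by a genuinely different mechanism than the paper's. Both arguments start from Proposition \ref{pro:3} to get a forward-global solution on $[t_0,\infty)$ with asymptotic slope $L^\ast\geq C_\gamma>0$. The paper then imposes the \emph{ad hoc} extra condition $v'(t_0)>v(t_0)/t_0$ and uses pure concavity: since $v''<0$, the backward graph lies below the tangent line at $(t_0,v(t_0))$, and the choice of slope forces that tangent to cross zero at a positive abscissa, so some $\tau_0\in(0,t_0]$ with $v(\tau_0)=0$ exists. You instead derive the first integral
\[
\frac{(v')^2}{2}-\frac{v^{1-\gamma}}{\gamma-1}=\frac{(L^\ast)^2}{2},
\]
which gives $v'$ as an explicit function of $v$, rules out any backward breakdown with $v$ bounded away from zero, and lets you compute the time-to-zero as the convergent integral $\int_0^a ds/\sqrt{(L^\ast)^2+2s^{1-\gamma}/(\gamma-1)}$ (integrable near $0$ precisely because $\gamma>1$). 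This is more quantitative and dispenses with the auxiliary slope condition; it also makes explicit why $v'$ blows up like $v^{(1-\gamma)/2}$ near the boundary, which the paper's tangent-line argument leaves implicit. Finally, you add the rescaling $\tilde v(t)=\lambda^{-2/(\gamma+1)}\bar v(\lambda t)$ with $\lambda=(L/L^\ast)^{(\gamma+1)/(\gamma-1)}$ to reach an arbitrary $L>0$; the paper's proof actually only produces \emph{some} $L$ and defers the rescaling to the subsequent uniqueness theorem, so on a literal reading of the lemma's statement your proof is the more complete one. The side remark about $L=0$ is harmless but unneeded, since $\mathbb{R}_+$ means $L>0$ here.
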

\begin{proof}
	We start proving that, 
	choosing  $ v'(t_0)>{v(t_0)}/{t_0}$ in \eqref{ODE2}, then   there exists $\tau_0\in (0,t_0]$ such that $v(t)$ given in Proposition \ref{pro:3},  can be extended as a solution of
	\begin{eqnarray*}
		\begin{cases}
			\displaystyle - v''=\frac{1}{v^\gamma} & t>\tau_0\\
			v>0 & t>\tau_0\\
			v(\tau_0)=0.
		\end{cases}
	\end{eqnarray*}
	Indeed each extension of $v(t)$ for $t<t_0$ is such that $v''(t)\leq 0$ and therefore  the graph of $v(t)$ lies below to the tangent line to $v(t)$ in $(t_0,v(t_0))$.  Since $ v'(t_0)>{v(t_0)}/{t_0}$, then  a such  $\tau_0>0$ exists.
	
	Let $v_0(t)$ be a such   solution,   let us define $\tilde{v}(t):=v_0(t+\tau_0)$. Then $\tilde{v}(0)=0$ and verifies  (\ref{ODE4}). 
\end{proof}

\begin{thm}\label{Thmunic}
	Let $M>0$ be fixed. Then there exists  a solution to
	\begin{eqnarray}\label{ODElim}
		\begin{cases}
			\displaystyle - w''=\frac{1}{w^\gamma} & t>0\\
			w(t)>0 &t>0 \\
			w(0)=0 \displaystyle \lim_{t\to+\infty}w'(t)=M,
		\end{cases}
	\end{eqnarray}
	and the solution is unique.
\end{thm}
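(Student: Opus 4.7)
The existence part of the statement follows at once from the preceding lemma applied with $L=M$, so my plan concentrates on uniqueness. Let $w$ be any solution to (\ref{ODElim}). The strategy is to reduce the second-order ODE to a separable first-order one by a conservation-law argument, and then invert an explicit antiderivative.

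First I would record monotonicity. Because $w''=-1/w^\gamma<0$ on $(0,+\infty)$, the derivative $w'$ is strictly decreasing; since $\lim_{t\to+\infty}w'(t)=M>0$, necessarily $w'(t)>M>0$ for every $t>0$, so $w$ is strictly increasing and $w(t)\to+\infty$ as $t\to+\infty$. Multiplying the equation by $w'$ and integrating on any subinterval $[s,t]\subset(0,+\infty)$ yields, using $\gamma>1$,
\[
\frac{1}{2}w'(t)^2+\frac{1}{\gamma-1}\,w(t)^{1-\gamma}\;=\;\frac{1}{2}w'(s)^2+\frac{1}{\gamma-1}\,w(s)^{1-\gamma},
\]
so the energy $E(t)$ on the left is constant on $(0,+\infty)$. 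Letting $t\to+\infty$ and using both $w'(t)\to M$ and $w(t)^{1-\gamma}\to 0$ (valid precisely because $\gamma>1$) gives $E\equiv M^2/2$. Combined with $w'>0$, this furnishes the first-order autonomous equation
\[
w'(t)=\sqrt{M^2+\frac{2}{\gamma-1}\,w(t)^{1-\gamma}},\qquad w(0)=0.
\]

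To close the argument I would separate variables and introduce
\[
\Phi(w):=\int_0^{w}\frac{d\sigma}{\sqrt{M^2+\dfrac{2}{\gamma-1}\,\sigma^{1-\gamma}}}.
\]
The integrand behaves like $\sqrt{(\gamma-1)/2}\,\sigma^{(\gamma-1)/2}$ near $\sigma=0$, hence is integrable there, and converges to $1/M$ as $\sigma\to+\infty$. Therefore $\Phi$ is a $C^1$ strictly increasing bijection of $[0,+\infty)$ onto itself with $\Phi(0)=0$ and $\Phi(+\infty)=+\infty$. Integrating the first-order ODE on $(t_0,t)$ with $t_0>0$ and sending $t_0\to 0^+$ using the continuity $w(t_0)\to w(0)=0$ gives $t=\Phi(w(t))$ for every $t\geq 0$, so $w=\Phi^{-1}$ is uniquely determined by $M$.

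The delicate point I anticipate is not in the algebra but at the singular endpoint $t=0$: the equation and the reciprocal integrand $1/\Phi'$ blow up there, so some care is required in passing to the limit $t_0\to 0^+$ in the separation-of-variables step. Integrability of $1/\Phi'$ at the origin, guaranteed by $\gamma>1$, makes this harmless once the strict monotonicity $w'>M>0$ on $(0,+\infty)$ has been secured via the concavity of $w$ together with the condition $\lim_{t\to+\infty}w'(t)=M$.
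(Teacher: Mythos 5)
Your proof is correct but takes a genuinely different route from the paper's. For uniqueness, the paper argues by contradiction with two solutions $w_1, w_2$: a weak comparison principle together with classical ODE uniqueness away from $t=0$ shows the graphs cannot cross, so one may assume $w_1<w_2$ everywhere; then $(w_1'-w_2')' = w_2^{-\gamma}-w_1^{-\gamma}<0$ combined with $w_1'(t),w_2'(t)\to M$ forces $w_1'-w_2'>0$, hence $w_1-w_2$ is increasing from $0$, contradicting $w_1<w_2$. You instead exploit the Hamiltonian structure: the first integral
\[
\frac12 (w')^2 + \frac{1}{\gamma-1}\,w^{1-\gamma} \equiv \frac{M^2}{2}
\]
is pinned down by the boundary condition at infinity (this is exactly where $\gamma>1$ is used, to ensure $w^{1-\gamma}\to 0$), and together with $w'>M>0$ it reduces the problem to a separable first-order ODE whose solution is uniquely recovered by inverting the integrable antiderivative $\Phi$. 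Your argument is more explicit and essentially self-contained (it yields a closed-form implicit formula $t=\Phi(w(t))$, and reproves existence as a byproduct via $\Phi^{-1}$), whereas the paper's comparison argument is shorter to write once the weak comparison principle is available and fits more naturally with the PDE toolkit used elsewhere in the paper. The one step you flagged as delicate — passing $t_0\to 0^+$ in the separation of variables — is indeed fine because the continuity of $w$ up to $t=0$ and the integrability of $\Phi'$ at the origin (exponent $(\gamma-1)/2>0$) make $\Phi(w(t_0))\to \Phi(0)=0$.
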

\begin{proof}
	Let $v$ be a solution of problem (\ref{ODE4}).  Let 
	$$\displaystyle \lambda:=\left(\frac{M}{L}\right)^\frac{\gamma+1}{\gamma-1},$$ where $\displaystyle L:= \lim_{t\to+\infty}v'(t)$. By the scaling 
	\eqref{rem:scaling}, we have
	$$
	w(t)=\lambda^{-\frac{2}{\gamma+1}}v(\lambda t)=\left(\frac{M}{L}\right)^{-\frac{2}{\gamma-1}}v\left(\left(\frac{M}{L}\right)^\frac{\gamma+1}{\gamma-1}t\right)
	$$
	is a solution of (\ref{ODElim}) and since $v'(t)\to L$ as $t\to+\infty$, $w'(t)\to M$
	as $t\to +\infty$.\ \\
	
	About the uniqueness, let us consider (by contradiction) $w_1,w_2$ two different solutions of (\ref{ODElim}). At first, let us assume that there exists $t_0>0$, the smallest value for which $w_1(t_0)=w_2(t_0)$. Taking into account the initial condition $w_1(0)=w_2(0)=0$ and that  $w_1,w_2$ are continuous, by the weak comparison principle it follows that $w_1(t)=w_2(t)$ on the interval $[0,t_0]$. Indeed, let us suppose without loss of generality, that $w_1\leq w_2$ in $[0,t_0]$; for any $\varepsilon>0$, let $\varphi:=(w_2-w_1-\varepsilon)$ be a test function for problems \eqref{ODE4} and \eqref{ODElim}. So we have
	\begin{eqnarray*}
		\int_0^{t_0}|\nabla (w_2-w_1-\varepsilon)|^2dx=\int_0^{t_0}\left( \frac{1}{w_2^\gamma}-\frac{1}{w_1^\gamma}\right)(w_2-w_1-\varepsilon)\,dx\leq 0.
	\end{eqnarray*}
	Then $w_1 = w_2+\varepsilon$ in $[0,t_0]$ for all $\varepsilon>0$, therefore $w_1=w_2$ in $[0,t_0]$. As a rule $w_1(t_0)=w_2(t_0)$ and $w_1'(t_0)=w_2'(t_0)$ then $w_1= w_2$ in $\mathbb{R}^+$ by uniqueness for ODEs (note that $w_1,w_2>0$ in $\mathbb{R}^+$ so that $-w''=w^{-\gamma}$ is a regular ODE). Consequently, different solutions $w_1$ and $w_2$ do not cross. 
	
	\
	
	From now on we may assume that $w_1 < w_2$ for all $t\in\mathbb{R}^+$. Notice that,
	$$
	(w_1'-w_2')'=\frac{1}{w_2^\gamma}-\frac{1}{w_1^\gamma}<0\quad \mbox{ in }\,\, \mathbb{R}^+.
	$$
	Since $w_1'(t),w_2'(t)\to M$ as $t\to+\infty$, then $w_1'(t)-w_2'(t)>0$ for all $t\in\mathbb{R}^+$ namely $w_1-w_2$ should be increasing in $\mathbb{R}^+$ causing $w_1=w_2$ in $\mathbb{R}^+$.
\end{proof}

\section{Conclusiom: proof of Theorem \ref{mainthm}}\label{Sec4}
Once that Theorem \ref{thm1D} is in force and therefore we know that 
\[
u(x)=u(x_N),
\]
we get that $u$ is a positive solution to
\[
-u''=\frac{1}{u^\gamma}\quad \mbox{ in }\,\, \mathbb{R}^+,
\]
\noindent with $u(0)=0$. Therefore the ODEs analysis of Section \ref{sezode} allows us to conclude that, either the solution is given by  \eqref{eq:esam3} or has linear growth and is completely classified by Theorem \ref{Thmunic}, taking into account   the scaling in \eqref{rem:scaling}.

\vspace{1cm}

\begin{center}{\bf Acknowledgements}\end{center}  
L. Montoro and B. Sciunzi are partially supported by PRIN project 2017JPCAPN (Italy): Qualitative and quantitative aspects of nonlinear PDEs, and L. Montoro by  Agencia Estatal de Investigaci\'on (Spain), project PDI2019-110712GB-100.

\

\begin{center}
{\sc Data availability statement}

\

All data generated or analyzed during this study are included in this published article.
\end{center}

\

\begin{center}
	{\sc Conflict of interest statement}
	
	\
	
	The authors declare that they have no competing interest.
\end{center}

\bibliographystyle{elsarticle-harv}

\end{document}